\documentclass[10pt]{amsart}
 \addtolength{\textwidth}{1cm}
\usepackage {amssymb}
\usepackage {amsmath}
\usepackage {graphicx}
\usepackage {hyperref}
\usepackage{amsthm,amsfonts,amsxtra,amssymb,amscd}
\usepackage[all]{xy}
\xyoption{2cell}
\usepackage{enumerate}
\newtheorem{definition}{Definition}[section]
\newtheorem{lemma}{Lemma}[section]

\newtheorem{theorem}{Theorem}[section]

\newtheorem{remark}{Remark}[section]

\newtheorem{proposition}{Proposition}

\def\A{{\mathcal A}}
\def\Z{{\mathbb Z}}
\def\R{{\mathbb R}}

\def\ome{{\omega}}

\def\N{{\mathbb N}}

\def\l{{\ell}}\def\Gama{{\Gamma_S}}

\newcommand{\T}{\mathbb{T}}
\newcommand{\ii}{{\rm i}}
\newcommand{\Gc}{{\mathcal{G}}}
\newcommand{\sma}{{\sigma}}
\title[Characteristic polynomials   related to the   NLS]
{Characteristic polynomials of color marked graphs, related to the
normal form of the non linear Schr\"{o}dinger
equation}\author{Nguyen Bich Van*}
\thanks{*Universit\`a di Roma,   La Sapienza}
\begin{document}
\maketitle
\section{Introduction}
\subsection{Some back ground}\label{background}
Consider the non linear Schr\"{o}dinger equation(NLS):
\begin{equation}\label{01}
iu_t-\triangle u=\kappa |u|^qu+\partial_{\bar{u}}G(|u|^2), q\geq
1\in \N
\end{equation}
 where $u=u(t,\varphi),\varphi\in\T^n$, $G(a)$ is a real analytic
function whose Taylor series start from the degree $q+2$.  One can
rescale the constant $\kappa=\pm1$.  Passing to the Fourier
representation:
\begin{equation}\label{02}
u(t,\varphi)=\sum_{k\in\Z^n}u_k(t)e^{(k,\varphi)};[u]_k:=u_k.
\end{equation}
It is well-known that the equation \eqref{01} can be written as an
infinite dimensional Hamiltonian dynamical system
$\dot{u}=\{H,u\}$, with Hamiltonian:
\begin{equation}\label{03}
H:=\sum_{k\in\Z^n}|k|^2u_k\bar{u}_k+\sum_{k\in\Z^n:\sum_{i=1}^{2q+2}(-1)^ik_i=0}u_{k_1}\bar{u}_{k_2}u_{k_3}\bar{u}_{k_4}. . . u_{2q+1}\bar{u}_{2q+2}+[G(|u|^2)]_0
\end{equation}
on the scale of complex Hilbert spaces:
\begin{equation}\label{03'}
\bar{\ell}^{a,p}:=\{u=\{u\}_{k\in\Z^n}|\sum_{k\in\Z^n}|u_k|^2e^{2a|k|}|k|^{2p}:=||
u||^2_{a,p}<\infty; a>0, p>n/2\}
\end{equation}
For $\epsilon$ sufficient small there is a analytic change of
variables which brings \eqref{03} to $H=H_N+P^{2q+2}(u)$, where
$P^{2q+2}(u)$ is analytic of degree at least $2(q+2)$ in $u$
while:
\begin{equation}\label{03''}
H_N:=\sum_{k\in\Z^n}|k|^2u_k\bar{u}_k+\sum_{\alpha,\beta\in(\Z^n)^{\N}:|\alpha|=|\beta|=q+1;\sum_k(\alpha_k-\beta_k)k=0,\sum_k(\alpha_k-\beta_k)|k|^2=0}\left(%
\begin{array}{c}
                                                                                                                                                           q+1 \\
                                                                                                                                                           \alpha \\
                                                                                                                                                         \end{array}%
                                                                                                                                                         \right) \left(%
\begin{array}{c}
  q+1 \\
  \beta \\
\end{array}%
\right)u^\alpha \bar{u}^\beta
\end{equation}
 Let us now partition:
\[\Z^n=S\cup S^c; S=\{v_1,. . . ,v_m\}\]
where $S$ is called \emph{tangential sites}, it is some
(arbitrarily large) subset of $\Z^n$ satisfying the
\emph{completeness condition}, $S^c$-\emph{normal sites}.
\newline We set
\begin{equation}\label{04}
u_k:=z_k, k\in S^c, u_{v_i}:= \sqrt {\xi_i+y_i} e^{\ii x_i}= \sqrt
{\xi_i}(1+\frac {y_i}{2 \xi_i }+\ldots  ) e^{\ii x_i}\;{\rm for}\;
i=1,\dots ,m,
\end{equation}
considering $\xi_i$ as parameters, $|y_i|<\xi_i$, while
$y,x,w:=z,\bar{z}$ are dynamical variables.  We separate $H=N+P$
where $N$ is the \emph{normal} form and collects all the terms of
$H_N$ of degree $\leq 2$.  We introduce
\begin{equation}\label{05}
A_r(\xi_1,. . . ,\xi_m)=\sum_{\sum_ik_i=r}\left(%
\begin{array}{c}
  r \\
  k_1,. . . ,k_m \\
\end{array}%
\right)^2\prod_i \xi_i^{k_i}
\end{equation}
By Proposition 4. 4 in \cite{CM2} we have
\begin{equation}\label{06}
N=(\omega(\xi),y)+\sum_k\Omega_k(\xi)|z_k|^2+Q_M(x,\omega)
\end{equation}
where \begin{equation}\label{07} \omega=\omega_0+\nabla_\xi
A_{q+1}(\xi),\Omega_k=|k|^2+(q+1)^2A_{q+1}(\xi)
\end{equation}
and $Q_M$ is given by formula \ref{08}.  Let $\{e_1,. . . ,e_m\}$ be a
basis of $\Z^m$.
\begin{definition}(edges)\label{edge}
Consider the elements: \begin{equation}
\label{edg}X_q:=\{\ell=\sum_{j=1}^{2q}\pm
e_{i_j}=\sum_{i=1}^{m}\ell_ie_i, \ell \neq
0,-2e_i,\eta(\ell)\in\{0,-2\}\}
\end{equation}
The {\em support} of an edge $\ell=\sum_in_ie_i$ is the set of indices $i$ with $n_i\neq 0$.
\end{definition}We have $\sum_i|\ell_i|\leq 2q$ and have imposed the
mass constraint $\sum_i\ell_i=\eta(\ell)\in\{0,-2\}$.  We call all
the elements respectively the \emph{black}, $\eta(\ell)=0$ and
\emph{red} $\eta(\ell)=-2$\emph{ edges} and denote them by $X^0_q,
X_q^{-2}$ respectively.   Notice that by our constraints  the support of an edge contains at least 2 elements.
\begin{definition}
\begin{itemize}
\item When $\ell\in X_q^0$, we define $\mathcal{P}_\ell$ as the
set of pairs $k,h$ satisfying $\sum_{j=1}^m\ell_jv_j+k-h=0;
\sum_{j=1}^m\ell_j|v_j|^2+|k|^2-|h|^2=0$.
\item When $\ell\in X_q^{-2}$, we define $\mathcal{P}_\ell$ as the
set of unordered pairs $\{h,k\}$ satisfying
$\sum_{j=1}^m\ell_jv_j+k+h=0;
\sum_{j=1}^m\ell_j|v_j|^2+|k|^2+|h|^2=0$.
\end{itemize}
\end{definition}
For every edge $\ell$, set $\ell=\ell^+-\ell^-$ and define
\begin{equation}\label{06'}
c(\ell)=c_q(\ell):=\left\{%
\begin{array}{ll}
    (q+1)^2\xi^{\frac{\ell^++\ell^-}{2}}\sum_{\alpha\in \N^m;|\alpha+\l^+|_1=q}\left(%
\begin{array}{c}
  q \\
  \l^++\alpha\\
\end{array}%
\right)\left(%
\begin{array}{c}
  q \\
  \l^-+\alpha\\
\end{array}%
\right)\xi^{\alpha}, & \hbox{$\l\in X_q^0;$} \\
   (q+1)q\xi^{\frac{\ell^++\ell^-}{2}}\sum_{\alpha\in \N^m;|\alpha+\l^+|_1=q-1}\left(%
\begin{array}{c}
  q+1 \\
  \l^-+\alpha\\
\end{array}%
\right)\left(%
\begin{array}{c}
  q-1 \\
  \l^++\alpha\\
\end{array}%
\right)\xi^{\alpha} , & \hbox{$\l\in X_q^{-2}$. } \\
\end{array}%
\right.
\end{equation}
Then in \cite{CM2} it has been proved that:
\begin{equation}\label{08}
Q_M(x,\ome)=\sum_{\l\in X_q^0}c(\l)e^{\l x}\sum_{(h,k)\in
\mathcal{P}_{\l}}z_h\bar{z}_k+\sum_{\l\in
X_q^{-2}}c(\l)\sum_{{h,k}\in \mathcal{P}_{\l}}(e^{\l
x}z_hz_k+e^{-\l x}\bar{z}_h\bar{z}_k)
\end{equation}

This is a very complicated infinite dimensional quadratic
Hamiltonian, one needs to decompose this infinite dimensional
system into infinitely many decoupled finite dimensional systems.
\newline \begin{definition}Momentum is the linear map
$\pi:\Z^m\rightarrow \Z^n,\pi(e_i)=v_i$.
\end{definition}
Define $iM(x)$ as the matrix of $ad(Q_M)$ in the basis
$z_k,\bar{z}_k$, while $iM$ is the matrix of $ad(Q_M)$in the basis
$e^{i\mu x}z_k,e^{-i\mu x}\bar{z}_k, \pi(\mu)+k=0$.  In \cite {CM2}
it was proved that $M$ is block diagonal with 2 blocks(denoted by
$A,\pm$ in correspondence with each connected component $A$ of the
geometric graph(c. f.  \ref{gegr} ).   The control of these blocks is
then needed to prove further non-degeneracy properties of this
Hamiltonian. \newline Set
\[\Z^m_c=\{\mu\in \Z^m|-\pi(\mu)\in S^c\}. \]
\begin{definition}
The graph $\tilde{\Gamma}_S$ has as vertices the variables $z_h,
\bar{z}_k$ and edges corresponding to the non-zero entries of
matrix $M(x)$ in the geometric basis(i. e.  $z_h,\bar{z}_k$).
\newline The graph $\Lambda$ has as vertices $\Z^m_c \times
\Z/(2)$ and edges corresponding to the non-zero entries of $M$ in
the frequency basis (i. e.  $e^{i\mu x}z_k,e^{-i\mu x}\bar{z}_k,
\pi(\mu)+k=0$).
\end{definition}
\begin{definition} Two points $h,k\in S^c$ are connected by a black
edge if $z_h,z_k$ are connected in $\tilde{\Gamma}_S$, while
$h,k\in S^c$ are connected by a red edge if $z_h,\bar{z}_k$ are
connected in $\tilde{\Gamma}_S$.
\end{definition}
Take a connected component $A$ of $\Gamma_S$. Consider block
$M_{A,+}$. Given two elements $a\neq b\in A$.  By formula \ref{08}
the matrix element $M_{a,b}$ is non-zero if and only if they are
joined by an edge $\l$ and then $M_{a,b}=c(\l)$ if $b=e^{i\mu
x}z_k$ or $M_{a,b}=-c(\l)$ if $b=e^{-i\mu x}\bar{z}_k$.
\[M_{A,-}=-\bar{M}_{A,+}\]
\newline In order to describe the matrix $iN_A$ of $ad(N)$ on A,
we have to finally compute the diagonal terms. One contribution
comes from \ref{07} and assumes the value $\nabla_\xi
A_{q+1}(\xi). \mu$ on the element $e^{i\mu x}z_k$.  \newline In
application of the KAM algorithm to our Hamiltonian a main point
is to prove the validity of the second Melnikov condition.  The
problem arises in the study of the second Melnikov equation where
we have to understand when it is that two eigenvalues are equal or
opposite.  The condition  for a polynomial to have distinct roots
is the non--vanishing of the discriminant while   the condition
for two polynomials to have a root in common  is the  vanishing of
the resultant.   In our case these resultants and discriminants are
polynomials in the parameters $\xi_i$ so, in order to make sure
that the singularities are only in measure 0 sets (in our case
even an algebraic hypersurface), it is necessary to show that
these polynomials are formally non--zero.    This is a purely
algebraic problem involving, in each dimension $n$, only finitely
many explicit polynomials  and so it can be checked by a finite
algorithm.  The problem is that, even in dimension 3, the total
number of these polynomials is quite high (in the order of the
hundreds or thousands) so that the algorithm becomes quickly non
practical.  In order to avoid this  we have experimented with a
conjecture which is stronger than the mere non-vanishing of the
desired polynomials.   We expect our polynomials to be irreducible
and separated, in the sense that the connected component of the graph giving rise to the block and its polynomial can be recovered from the associated characteristic polynomial.
\subsection{A geometric graph}\label{gegr}

To the set $S$ we associate the following configuration,  given
two distinct elements $v_i,v_j\in S$  construct the sphere
$S_{i,j}$ having  the two vectors as opposite points of a diameter
and the two hyperplanes, $H_{i,j},\ H_{j,i}$,  passing through
$v_i$  and $v_j$ respectively, and perpendicular to the line
though the two vectors  $v_i,v_j. $
\smallskip

From this configuration of spheres and pairs of parallel
hyperplanes  we deduce a {\emph combinatorial colored graph},
denoted by $\Gamma_S$, with vertices  the points in $\R^n$ and two
types of edges, which we call {\emph black} and {\emph red}.

\begin{itemize}\item A black edge connects  two points $p\in H_{i,j},\ q\in H_{j,i}$, such that the line $p,q$ is orthogonal to the two hyperplanes, or in other words $q=p+v_j-v_i$.

\item A red edge connects  two points $p,q\in S_{i,j} $ which are opposite points of a diameter.
\end{itemize}
{\bf The Problem}\quad The problem consists in the study of the
connected components of this  graph.   Of course  the nature of the
graph depends upon the choice of $S$  but one expects a relatively
simple behavior for $S$  {\emph generic}.  It is immediate by the
definitions  that the points in $S$ are all pairwise connected by
black and red edges  and it is not hard to see  that, for generic
values of $S$,  the set $S$ is itself a connected component which
we call the {\emph special component}.

\subsection{The Cayley  graphs\label{Cg}}
In order to understand the graph $\Gamma_S$ we develop a formal
setting.  Let $G$ be a group and $X=X^{-1}\subset G$ a
subset. \subsubsection{Marked graphs}
\begin{definition}
An $X$--marked graph is an oriented graph $\Gamma$ such that each
oriented edge is marked with an element $x\in X$.
$$\xymatrix{ &a\ar@{->}[r]^{x} &b&  &a\ar@{<-}[r]^{\ x^{-1}} &b&   }$$
We mark the same edge, with opposite orientation, with $x^{-1}$.
Notice that if $x^2=1$  we may drop the orientation of the edge.
\end{definition}
 \subsubsection{Cayley graphs}
 A typical way to construct an $X$--marked graph is the following.  Consider an action $G\times A\to A$ of $G$ on a set $A$, we then define.
 \begin{definition}[Cayley graph] The graph $A_X$ has as  vertices   the elements of $A$ and, given $a,b\in A$ we join them by an oriented edge $a\stackrel{x}\rightarrow b$, marked $x$, if $b=xa,\ x\in X$.
 \end{definition}
A special case is obtained when $G$ acts on itself by left (resp.
right) multiplication and we have the Cayley graph $G_X^l$ (resp.
$G_X^r$).    We  concentrate on $G_X^l$ which we just denote by
$G_X$.
\subsubsection{The linear rules}
  Denote by  ${\Z^m}:=\{\sum_{i=1}^ma_ie_i,\  a_i\in\Z\}$  the lattice  with basis the elements $e_i$.
We consider  the group $G:={\Z^m}\rtimes\Z/(2)$ semi--direct
product.  Its elements are  couples $(a,\sigma)$ with $a\in \Z^m$,
$\sigma=\pm 1$.   It will be notationally convenient to identify by
$a$ the element $(a,+1)$ and by $\tau$ the element $(0,-1)$.  Note
the commutation rules $a\tau= \tau (-a)$.
 Sometimes we refer to the elements $a=(a,+1)$  as {\em black} and $a\tau=(a,-1)$  as {\em red}.

\begin{definition}
We set $\Lambda$ to be the Cayley graph associated to the elements
$X_q:=X^0_q\cup X^{-2}_q$.
\end{definition}
\subsubsection{From the combinatorial to the geometric graph}
In our geometric setting, we have chosen a list $S$ of vectors
$v_i$ and   we then define  $\pi:\Z^m\to \R^n$ by $\pi: e_i\mapsto
v_i$.

We then think of $G$ also as linear operators on $\R^n$ by setting
\begin{equation}\label{azione}
a  k:=  -\pi(a)+ k,\ k\in\R^n,\ a\in \Z^m\,,\quad \tau k= - k
\end{equation} We  extend $\pi:\Z^m\to \R^n$  to $\Z^m\rtimes \Z/(2)$
  by setting $\pi( a\tau):= \pi(a)  $ so that $-\pi$ is just the orbit map of 0 associated to the action \eqref{azione} (the sign convention is suggested by the conservation of momentum in the NLS).

  We then have that $X$ defines also a Cayley graph on $\R^n$  and  in fact the graph  $\Gamma_S$ is a subgraph of this graph.

There are symmetries in the graph.  The symmetric group $S_m$ of
the $m!$ permutations of the elements $e_i$ preserves the graph.
We have the right actions of $G$, on  the graph: The sign change
\begin{equation} \label{sitr}(b,\sigma)\mapsto (b,\sigma)\tau=b
\sigma\tau,\quad (b,\sigma)\mapsto (b,\sigma) a= (b+\sigma
a,\sigma),\ \forall a,b\in{\Z^m}.
\end{equation}
 Up to the $G$ action  any subgraph an be translated to one containing 0.

 We give definitions which are useful to describe the graphs that appear in our construction.
\begin{definition}\label{CMG}
A {\em complete marked graph}, on a set $A\subset
{\Z^m}\rtimes\Z/(2)$ is the full sub--graph generated by the
vertices   in $A$.
\end{definition}

\begin{definition}
A  graph $A$ with $k+1$ vertices is  said to be of {\em dimension}
$k$
\end{definition}

\subsection{Characteristic polynomials of complete color marked graphs}
As we said in \ref{background} for every complete color marked
graph $\mathcal{G}$ we will consider the matrix $C_{\mathcal{G}}$
indexing by vertices of $\mathcal{G}$ as computed in \cite{CM2}  \S 11. 1. 1:\smallskip

Given  $(a,\sigma), a=\sum_{i=1}^{m}n_ie_i$ set
    \begin{equation}\label{1}
   (q+1) a(\xi):=
 \sum_{i=1}^{m}n_i\frac{\partial}{\partial\xi_i}A_{q+1}(\xi)  \end{equation}
then
\begin{itemize}
    \item In the diagonal at the position $(a,\sigma), a=\sum_{i=1}^{m}n_ie_i$ we put
    \begin{equation}\label{1}
    \begin{cases}
(q+1) a(\xi)\quad\text{if}\ \sigma=1\\
 -(q+1) a(\xi)+2(q+1)^2A_q(\xi)\quad\text{if}\ \sigma=-1
\end{cases}\end{equation}
\item At the position $((a,\sigma_a),(b,\sigma_b))$ we put 0 if
they are not
    connected, otherwise we put $\sigma_b c(\ell)$ (c. f.  \ref{06'}, where $\ell$ is
    the edge
    connecting
    $a,b$.
    \end{itemize}
Define
$\chi_{\Gc}=\chi_{C_{\mathcal{G}}}(t)=det(tI-C_\mathcal{G})$- the
characteristic polynomial of $C_\mathcal{G}$.
\newline As we said in \ref{background} in order to check the
second Melnikov condition we expect that $\chi_{\mathcal{G}}$ are
irreducible over $\Z$ and separated.  In \cite{CM3} we have proved
this  for the case $q=1,n\in\N$.  Here we start to prove
irreducibility and separation for bigger $q$ and low dimensions.
\section{Irreducibility of
characteristic polynomials }

    \begin{lemma}\label{lem1}
    For any $a\in\Z^m$: $a(\xi)$ has integer coefficients.
    \end{lemma}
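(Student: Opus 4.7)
My plan is to prove the lemma by unpacking the definitions of $A_{q+1}(\xi)$ and $a(\xi)$ and extracting an explicit factor of $q+1$ from each partial derivative $\partial_{\xi_i}A_{q+1}$ via a standard multinomial identity. Once that factor is visible, dividing by $q+1$ in \eqref{1} becomes harmless and the resulting coefficients are manifestly products of integers.

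Concretely, I would first write, using \eqref{05},
\begin{equation*}
\frac{\partial}{\partial\xi_i}A_{q+1}(\xi)=\sum_{\substack{k\in\N^m,\,|k|_1=q+1\\ k_i\ge 1}} k_i\binom{q+1}{k_1,\dots,k_m}^{\!2}\xi^{k-e_i}.
\end{equation*}
The next step is to apply the elementary identity
\begin{equation*}
k_i\binom{q+1}{k_1,\dots,k_m}=(q+1)\binom{q}{k_1,\dots,k_i-1,\dots,k_m},
\end{equation*}
which turns one factor of the squared multinomial into $(q+1)\binom{q}{k-e_i}$. Reindexing by $j:=k-e_i\in\N^m$ with $|j|_1=q$ then yields
\begin{equation*}
\frac{\partial}{\partial\xi_i}A_{q+1}(\xi)=(q+1)\sum_{|j|_1=q}\binom{q+1}{j+e_i}\binom{q}{j}\xi^{j}.
\end{equation*}

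Substituting this into the defining relation $(q+1)a(\xi)=\sum_i n_i\,\partial_{\xi_i}A_{q+1}(\xi)$ and cancelling the common factor $(q+1)$ gives
\begin{equation*}
a(\xi)=\sum_{i=1}^{m} n_i\sum_{|j|_1=q}\binom{q+1}{j+e_i}\binom{q}{j}\xi^{j},
\end{equation*}
which is a $\Z$-linear combination of products of binomial coefficients, hence has integer coefficients. The only real content of the argument is the multinomial identity above; the rest is bookkeeping, so I do not expect any genuine obstacle, and the same computation will in fact be useful later when comparing $a(\xi)$ with the diagonal entries in \eqref{1} and with $c(\ell)$ in \eqref{06'}.
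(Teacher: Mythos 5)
Your proposal is correct and follows essentially the same route as the paper: both compute $\partial_{\xi_i}A_{q+1}$ termwise and use the multinomial identity $k_i\binom{q+1}{k}=(q+1)\binom{q}{k-e_i}$ to extract the factor $q+1$ before dividing. The paper states this more tersely (as divisibility of $\binom{q+1}{\beta}^2\beta_i$ by $q+1$) but the argument is identical.
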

    \begin{proof}Let $a=\sum_in_ie_i$.  We have
    \[\frac{\partial}{\partial\xi_i}A_{q+1}(\xi)=\sum_{\beta\in\mathbb{N}^m;|\beta|_1=q+1;\beta_i\geq
    1}(\begin{array}{c}q+1\\\beta\end{array})^2\beta_i\xi_1^{\beta_1}. . . \xi_i^{\beta_i-1}. . . \xi_m^{\beta_m}\]
\[(\begin{array}{c}q+1\\\beta\end{array})^2\beta_i=(\begin{array}{c}q+1\\\beta\end{array})(\begin{array}{c}q\\\beta_1,. . . ,\beta_i-1,. . . ,\beta_m\end{array})(q+1)\]
is divisible by $q+1$.  \end{proof}  Hence all diagonal elements
of $C_\mathcal{G}$ are divisible by $q+1$. Besides by the formula
\ref{06'} all off-diagonal elements of $C_\mathcal{G}$ are also
divisible by $q+1$.  Thus we can write:
\[C_\mathcal{G}=(q+1)\tilde{C}_\mathcal{G}\Rightarrow \chi_{C_{\mathcal{G}}}(t)=det(tI-C_\mathcal{G})=det((q+1)\tilde{t}I-(q+1)\tilde{C_\mathcal{G}})=(q+1)^{n+1}\chi_{\tilde{C_\mathcal{G}}}(\tilde{t}) \]
So in order to prove the irreducibility and the separation of the
polynomials $\chi_{C_\mathcal{G}}$ it is enough to prove the
irreducibility and the separation of  the polynomials
$\chi_{\tilde{C}_\mathcal{G}}$.  For simplicity we will denote
$\chi_{\tilde{C}_\mathcal{G}}$ also by $\chi_{\mathcal{G}}$, and
we will redefine  $  c(\l)$ by division the right hand sides of
\ref{06'} by $q+1$:
 \begin{equation}\label{06''}
c(\ell)=c_q(\ell):=\left\{
\begin{array}{ll}
    (q+1)\xi^{\frac{\ell^++\ell^-}{2}}\sum_{\alpha\in \N^m;|\alpha+\l^+|_1=q}\left(%
\begin{array}{c}
  q \\
  \l^++\alpha\\
\end{array}
\right)\left(
\begin{array}{c}
  q \\
  \l^-+\alpha\\
\end{array}
\right)\xi^{\alpha}, & \hbox{$\l\in X_q^0;$} \\
   q\xi^{\frac{\ell^++\ell^-}{2}}\sum_{\alpha\in \N^m;|\alpha+\l^+|_1=q-1}\left(%
\begin{array}{c}
  q+1 \\
  \l^-+\alpha\\
\end{array}
\right)\left(
\begin{array}{c}
  q-1 \\
  \l^++\alpha\\
\end{array}
\right)\xi^{\alpha} , & \hbox{$\l\in X_q^{-2}$. } \\
\end{array}
\right.
\end{equation}

Take a complete colored marked graph ${\mathcal A}$  and compute
its characteristic polynomial $\chi_\A(t)$.  We have:
\begin{theorem}\label{lafatt}
 When we  set a variable
$\xi_i=0$ in  $\chi_\A(t)$ we obtain the product  of the polynomials
$\chi_{\A_i}(t)$  where the $A_i$ are the connected components of
the graph obtained from $\A$ by deleting all the edges in which
$i$ appears as index, with the induced markings (with $\xi_i=0$).
\end{theorem}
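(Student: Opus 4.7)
The plan is to show that after the substitution $\xi_i=0$ the matrix $\tilde{C}_\A$ is block--diagonal, with blocks exactly the matrices $\tilde{C}_{\A_j}|_{\xi_i=0}$ of the connected components $\A_j$ of the sub--graph obtained from $\A$ by deleting every edge whose support contains $i$. The desired factorisation of the characteristic polynomial then follows from the standard determinantal identity for a block--diagonal matrix.

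The only substantive ingredient is the following vanishing: for every edge $\ell$ whose support contains $i$ one has $c(\ell)|_{\xi_i=0}=0$. I would read this off directly from formula \eqref{06''}: in both cases ($\ell\in X_q^0$ and $\ell\in X_q^{-2}$), $c(\ell)$ is the common prefactor $\xi^{(\ell^++\ell^-)/2}$ times a sum of monomials in $\xi$ with non--negative integer coefficients, so no cancellation can occur; and when $i\in\mathrm{supp}(\ell)$ we have $\ell^+_i+\ell^-_i=|\ell_i|>0$, so $\xi_i$ divides the prefactor and the whole coefficient vanishes at $\xi_i=0$.

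With this at hand I would compare $\tilde{C}_\A|_{\xi_i=0}$, after reordering the rows and columns component by component, with the direct sum $\bigoplus_j \tilde{C}_{\A_j}|_{\xi_i=0}$. A diagonal entry at a vertex $(a,\sigma)$ equals either $a(\xi)$ or $-a(\xi)+2(q+1)A_q(\xi)$, so it depends only on the vertex (and on the global vector $\xi$); in particular its value after $\xi_i=0$ coincides with the corresponding diagonal entry of $\tilde{C}_{\A_j}|_{\xi_i=0}$ whenever $(a,\sigma)\in\A_j$. An off--diagonal entry between two vertices of the same component $\A_j$ corresponds to an edge $\ell$ not involving $i$, so the edge is retained in $\A_j$ with the same value $\pm c(\ell)$. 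Finally, any off--diagonal entry between vertices of two distinct components is either already zero (no edge present in $\A$) or corresponds to an edge $\ell$ with $i\in\mathrm{supp}(\ell)$, and in the latter case it vanishes by the previous step.

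Thus after the block--permutation one has $\tilde{C}_\A|_{\xi_i=0}=\bigoplus_j\tilde{C}_{\A_j}|_{\xi_i=0}$, which yields $\chi_\A(t)|_{\xi_i=0}=\prod_j\chi_{\A_j}(t)|_{\xi_i=0}$, as claimed. There is no genuine obstacle here: the proof reduces entirely to the vanishing of the off--diagonal coefficients noted above, and once that is in hand everything else is a permutation of indices followed by the block--diagonal determinant formula.
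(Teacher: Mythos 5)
Your proposal is correct and is exactly the argument the paper leaves implicit: its proof of Theorem \ref{lafatt} is the single line ``this is immediate from the form of the matrices,'' and your writeup simply supplies the details (the prefactor $\xi^{(\ell^++\ell^-)/2}$ in \eqref{06''} kills every entry of an edge whose support contains $i$, the diagonal entries depend only on the vertex, and the block--diagonal determinant formula does the rest). One micro-gloss: two vertices of the same component $\A_j$ may be joined in $\A$ by a deleted edge involving $i$ while remaining connected through other edges, but then the corresponding entry is zero on both sides, so the identification of the matrices still holds.
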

\begin{proof}
This is immediate from the form of the matrices.
\end{proof}
\begin{remark}\label{rem3. 1}
\begin{equation}
\frac{\partial}{\partial\xi_i}A_{q+1}(\xi)|_{\xi_i=\xi_j}=\frac{\partial}{\partial\xi_j}A_{q+1}(\xi)|_{\xi_i=\xi_j}\forall
i,j
\end{equation}
\end{remark}

%\begin{remark}\label{rem1. 3}
%With the arithmetical constraint we need to consider the graphs
%containing only black edges and the graphs formed by only one red
%edge.
%\end{remark}
\begin{remark}\label{rem1. 2}
Let $b=\sum_{i=1}^{k}n_ie_i,n_i\neq 0;\sum_{i=1}^kn_i=0$. Then:
\begin{equation}\label{eqrem1. 2}
b(\xi)|_{\xi_1=\xi_2=\ldots=\xi_k}=0
\end{equation}
\end{remark}
\begin{proof} By the remark \ref{rem3. 1} we have:
\[b(\xi)|_{\xi_1=\xi_2=. . . =\xi_k}=\sum_{i=1}^{k}n_i\frac{\partial}{\partial \xi_i}A_{q+1}(\xi)|_{\xi_1=\xi_2=. . . =\xi_k}=\frac{\partial}{\partial \xi_1}A_{q+1}(\xi)|_{\xi_1=\xi_2=. . .
=\xi_k}\sum_{i=1}^{k}n_i=0\].\end{proof}
\begin{remark}\label{rem1.4} Let $\l=\l^+-\l^-$ be an edge. We have:

i) If $\l$ is a black edge, then $|\l^+|_1=|\l^-|_1\leq
q$.

ii) If $\l$ is a red edge, then $|\l^+|_1\leq q-1, |\l^-|_1\leq
q+1$.\end{remark}
\begin{proof} By the definition of edges we have : \begin{equation}\label{40} |\l^+|_1+|\l^-|_1\leq 2q.\end{equation} On the other hand:

i) If $\l$ is a black edge, then
\begin{equation}\label{41}|\l^+|_1-|\l^-|_1=0.\end{equation} From \eqref{40} and \eqref{41} we get $|\l^+|_1=|\l^-|_1\leq q.$

ii) If $\l$ is red edge, then
\begin{equation}\label{42} |\l^+|_1-|\l^-|_1=-2.\end{equation}
From \eqref{40} and \eqref{42} we get $|\l^+|\leq q-1,|\l^-|_1\leq
q+1$.
\end{proof}
\begin{remark}\label{rem5}: Let $\l=\sum_{i=1}^kn_ie_i=\l^+-\l^-,n_i\neq 0,$ be an edge.

i) If $\l$ is a black edge and $k=m$, then
$|\l^{+}|_1=|\l^{-}|_1=q$ and $c(\l)=(q+1)\xi^{(\l^++\l^-)/2}\left(%
\begin{array}{c}
  q \\
  \l^+\\
\end{array}%
\right)\left(%
\begin{array}{c}
  q \\
  \l^- \\
\end{array}%
\right)$.

ii) If $\l$ is a red edge and $k=m$, then $|\l^{+}|_1=q-1,
|\l^{-}|_1=q+1$ and  $c(\l)=q\xi^{(\l^++\l^-)/2}\left(%
\begin{array}{c}
  q+1 \\
  \l^-\\
\end{array}%
\right)\left(%
\begin{array}{c}
  q-1 \\
  \l^+\\
\end{array}%
\right)$.\end{remark}
\begin{proof} Since $S=\{v_1,...,v_m\}$ is some arbitrarily large
set, we may suppose $m\geq 2q$. If $k=m$ then
$|\l^+|_1+|\l^-|_1=\sum_{i=1}^mn_i\geq m\geq 2q$. Moreover, by
definition of edges $\sum_{i=1}^mn_i\leq 2q$. Hence:
\begin{equation}\label{38}|\l^+|_1+|\l^-|_1=\sum_{i=1}^mn_i=2q.\end{equation}
i) When $\l$ is a black edge, we have
\begin{equation}\label{37}|\l^+|_1-|\l^-|_1=0\end{equation}
From \eqref{38} and \eqref{37} we get $|\l^+|_1=|\l^-|_1=q$. By
formula \eqref{06''} we obtain $c(\l)=(q+1)\xi^{(\l^++\l^-)/2}\left(%
\begin{array}{c}
  q \\
  \l^+ \\
\end{array}%
\right)\left(%
\begin{array}{c}
  q \\
  \l^- \\
\end{array}%
\right)$.
\newline ii) When $\l$ is a red edge, we have
\begin{equation}\label{39}|\l^+|_1-|\l^-|_1=-2\end{equation} From
\eqref{38} and \eqref{39} we get $|\l^+|_1=q-1,|\l^-|_1=q+1$. By
formula \eqref{06''} we obtain $c(\l)=q\xi^{(\l^++\l^-)/2}\left(%
\begin{array}{c}
  q+1 \\
  \l^-\\
\end{array}%
\right)\left(%
\begin{array}{c}
  q-1 \\
  \l^+\\
\end{array}%
\right)$.
\end{proof}
We finally recall Proposition 14 of \cite{CM2}
\begin{proposition}\label{ilrido} (i) For $n=1$ and for generic choices of $S$, all the connected components of $\Gama$ are either vertices or single edges.

(ii) For $n=2$,  and  for every $m$  there exist infinitely many choices of generic tangential sites $S=\{v_1,\ldots,v_m\}$ such that, if $A$ is a connect component of the geometric graph  $\Gama$, then $A$ is either a vertex or a single  edge.
\end{proposition}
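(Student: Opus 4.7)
The plan is to reduce both parts of the Proposition to the following key claim: for generic $S$, no vertex of $\Gama$ is incident to two distinct edges. A connected component with three or more vertices necessarily contains some vertex of degree at least $2$, so ruling this out forces every component to be either an isolated vertex or a single edge.

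For part (i), in dimension $n=1$, I would observe that each combinatorial edge $\l\in X_q$ pins down its pair of endpoints $\{h,k\}\subset\Z$ uniquely. Indeed, the momentum and energy conditions defining $\mathcal{P}_\l$ reduce to two scalar equations of the form
\[
h\mp k=\mp\sum_j\l_j v_j,\qquad h^2\mp k^2=\mp\sum_j\l_j v_j^2
\]
(with signs dictated by $\l\in X_q^0$ or $\l\in X_q^{-2}$), and, when $\sum_j\l_j v_j\neq 0$, these determine the unordered pair $\{h,k\}$ as explicit rational functions of $\sum_j\l_jv_j$ and $\sum_j\l_jv_j^2$. If some vertex $q\in\Z$ were to lie on two distinct edges $\l,\l'$, equating the corresponding formula for $q$ coming from $\l$ with that coming from $\l'$ would yield a polynomial relation $F_{\l,\l'}(v_1,\dots,v_m)=0$. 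Since $X_q$ is finite, the union $\bigcup_{\l\neq \l'}\{F_{\l,\l'}=0\}$ is a finite union of algebraic hypersurfaces in $\RR^m$; any $S$ in its complement satisfies the key claim, proving (i).

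For part (ii), in dimension $n=2$, each edge no longer pins down its endpoints but only constrains them to lie on a 1-parameter family. For a black edge with source $p$, the single scalar equation $p\cdot\sum_j\l_j v_j=(\sum_j\l_j|v_j|^2-|\sum_j\l_j v_j|^2)/2$ carves out an affine line in $\RR^2$. A hypothetical chain $p\to q\to r$ realized by edges $\l_1,\l_2$ then forces the shared vertex $q$ onto the intersection of two such affine lines: generically in $v$, this is a single point $q^\ast(v)$ depending algebraically on $v$, and the further requirement $q^\ast(v)\in\Z^2$ defines a proper algebraic subvariety of $(\Z^2)^m$. As $(\l_1,\l_2)$ ranges over the finite set $X_q\times X_q$ one obtains finitely many such bad loci; a rescaling or density argument (for instance, replacing $S$ by $\lambda S$ for varying integer $\lambda$, or verifying that the complement of a proper subvariety in $(\Z^2)^m$ contains infinitely many lattice points) then furnishes infinitely many integer configurations $S$ avoiding all of them.

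The main obstacle is the algebraic verification that each $F_{\l,\l'}$, or its dimension-two analogue, is genuinely non-zero as a polynomial in $v_1,\dots,v_m$. Pairs of edges related by the natural symmetries \eqref{sitr} or by reversal $\l'=-\l$ can collapse the constraint into an identity and must be isolated and discarded at the outset. The bounds on $|\l^+|_1,|\l^-|_1$ from Remarks \ref{rem1.4} and \ref{rem5} constrain the combinatorics enough that the verification reduces to a finite case analysis stratified by color (black-black, black-red, red-red), which can be carried out by inspection of $\pi(\l)$ and the associated quadratic invariant $\sum_j\l_j|v_j|^2$.
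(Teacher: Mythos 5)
This statement is not proved in the paper at all: it is explicitly ``recalled'' as Proposition 14 of \cite{CM2}, so there is no in-paper proof to measure your argument against. Judged on its own terms, your reduction to ``no vertex of $S^c$ has degree $\geq 2$'' and the strategy of excising finitely many bad algebraic loci indexed by pairs $(\ell,\ell')\in X_q\times X_q$ is the natural shape of such a proof (note you must also set aside the special component $S$ itself, which is complete and hence not a single edge). But the argument as written has two genuine gaps. First, in part (i) the entire content of the statement is the assertion that each polynomial $F_{\ell,\ell'}$ is not identically zero in $v_1,\dots,v_m$; you name this as ``the main obstacle'' and then defer it to an unexecuted finite case analysis. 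Without it the ``bad set'' could be all of parameter space and nothing is proved. The degenerate pairs are not only $\ell'=-\ell$ and symmetry images: for $q\geq 2$ one has proportional edges such as $\ell=e_1-e_2$ and $\ell'=2e_1-2e_2$ with $\pi(\ell')=2\pi(\ell)$ identically, and these must be analyzed, not merely ``discarded at the outset.''

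Second, and more seriously, part (ii) conflates an algebraic condition with an arithmetic one. The vertices of $\Gamma_S$ relevant here are lattice points, and since $v\in(\Z^2)^m$, the intersection point $q^\ast(v)$ of the two lines (or circles, for red edges) is automatically a \emph{rational} point; the condition $q^\ast(v)\in\Z^2$ is therefore not ``a proper algebraic subvariety of $(\Z^2)^m$'' and cannot be dismissed by a dimension count. Worse, your proposed fix goes in the wrong direction: under $v\mapsto\lambda v$ the line equations scale so that $q^\ast(\lambda v)=\lambda\,q^\ast(v)$, hence rescaling by a multiple of the denominators \emph{creates} integral intersection points rather than destroying them. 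This arithmetic obstruction is precisely why the $n=2$ statement is phrased as ``there exist infinitely many choices'' rather than ``for generic choices'': one needs an explicit construction or a congruence argument guaranteeing $q^\ast(v)\notin\Z^2$ for all of the finitely many pairs $(\ell,\ell')$ simultaneously, together with a treatment of the parallel-line and tangent-circle degeneracies. Supplying that construction is the substance of the $n=2$ case, and it is missing from your proposal.
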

\smallskip

\textbf{Obtained results}:  For graphs reduced to one vertex the
statement is trivial.   At the moment we are able to prove the
irreducibility and separation in  dimension 1,  and dimension 2,
under the assumptions of Proposition \ref{ilrido} for all $q$
since   all graphs which appear have at most one edge.
\subsection{One edge}
\begin{theorem}
For any $q$ and any connected color marked graph with one edge
the characteristic polynomial is irreducible.
\end{theorem}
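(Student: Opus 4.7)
The plan is first to observe that a connected marked graph with a single edge has exactly two vertices, so by translating via the right $G$-action \eqref{sitr} I may take them to be $(0,+)$ and $(\ell,\sigma)$, with $\sigma=+$ for a black edge and $\sigma=-$ for a red one. Then $\tilde C_{\mathcal G}$ is $2\times 2$ and $\chi_{\mathcal G}(t)$ is a monic quadratic in $t$ with coefficients in $\Z[\xi]$; since $\Z[\xi]$ is a UFD and $\chi_{\mathcal G}$ is primitive, Gauss's lemma makes irreducibility over $\Z[\xi]$ equivalent to non-squareness of the discriminant in $\Z[\xi]$. Reading off the entries from the rules in the paper gives, in the black case,
\[
\chi_{\mathcal G}(t) = t^2 - \ell(\xi)\,t - c(\ell)^2, \qquad \Delta^{\mathrm{bk}} = \ell(\xi)^2 + 4\, c(\ell)^2,
\]
and, setting $D(\xi) := \ell(\xi) - 2(q+1)A_q(\xi)$, in the red case,
\[
\chi_{\mathcal G}(t) = t^2 + D(\xi)\,t + c(\ell)^2, \qquad \Delta^{\mathrm{rd}} = D(\xi)^2 - 4\, c(\ell)^2.
\]

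I would then suppose for contradiction that $\chi_{\mathcal G}(t) = (t-\lambda_+)(t-\lambda_-)$ with $\lambda_\pm \in \Z[\xi]$ and pick any $i$ in the support of $\ell$. By Theorem \ref{lafatt}, setting $\xi_i = 0$ removes the single edge and disconnects the graph into its two isolated vertices, so $\chi_{\mathcal G}(t)|_{\xi_i = 0}$ is the product of the two one-vertex polynomials. This forces $\{\lambda_+|_{\xi_i=0},\ \lambda_-|_{\xi_i=0}\}$ to coincide with the diagonal entries of $\tilde C_{\mathcal G}$ at $\xi_i=0$: $\{0,\ \ell|_{\xi_i=0}\}$ in the black case and $\{0,\ -D|_{\xi_i=0}\}$ in the red case. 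Combining this specialization with the Vieta identity $\lambda_+\lambda_- = \mp\, c(\ell)^2$ and the explicit factorization $c(\ell)^2 = (q+1)^2\,\xi^{|\ell|}\,S(\xi)^2$ coming from \eqref{06''}, a $\xi_i$-adic expansion forces the divisibility
\[
\ell(\xi)|_{\xi_i=0}\ \text{divides}\ (q+1)^2\,\xi^{|\ell|-|n_i|e_i}\,\bigl(S(\xi)|_{\xi_i=0}\bigr)^{2}\ \text{in}\ \Z[\xi_j:j\neq i],
\]
and the analogous statement with $D$ in the red case. The strategy is to rule this divisibility out.

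Two supporting ingredients are needed. First, the non-vanishing $\ell(\xi)|_{\xi_i=0}\neq 0$ (and $D|_{\xi_i=0}\neq 0$ in the red case), which I would establish from the direct identities $\partial_i A_{q+1}|_{\xi_i=0} = (q+1)^2\, A_q|_{\xi_i=0}$ and $\partial_j A_{q+1}|_{\xi_i=0} = \partial_j(A_{q+1}|_{\xi_i=0})$ for $j\neq i$: these exhibit $\ell|_{\xi_i=0}$ explicitly as a nonzero polynomial in the remaining $\xi_j$. Second, a degree and irreducible-factor comparison between $\ell|_{\xi_i=0}$ and $\xi^{|\ell|-|n_i|e_i}\,(S|_{\xi_i=0})^2$ to rule out the divisibility. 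The main obstacle is this second step: carrying it out uniformly in $q$ and over all admissible edges $\ell$ --- with varying support sizes and parities of the coefficients $n_i$ --- is the core technical challenge. In concrete small cases one can check it by hand (for instance, for $q=2$, $\ell = e_1 - e_2$ one obtains $\Delta^{\mathrm{bk}} = 4(\xi_1+\xi_2)^2\,(\xi_1^2 + 34\xi_1\xi_2 + \xi_2^2)$ with the quadratic factor irreducible, confirming non-squareness), and the remaining task is to promote such verifications to a uniform structural obstruction using the specific binomial-coefficient identities linking $\ell(\xi)$ and $S(\xi)$ via the NLS normal form.
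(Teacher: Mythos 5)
Your setup is sound and parallels the paper's: reduce to a $2\times2$ matrix rooted at $0$, write $\chi_{\Gc}=t^2-\bar\l(\xi)t-\sigma_\l c(\l)^2$, assume a factorization into monic linear factors over $\Z[\xi]$ (legitimate by Gauss's lemma and integral closedness of $\Z[\xi]$), and extract constraints by specializing $\xi_i=0$, which by Theorem \ref{lafatt} forces one root to vanish there and hence, via Vieta and the divisibility of $c(\l)^2$ by $\xi_i^{|n_i|}$, yields a divisibility condition on $\bar\l(\xi)|_{\xi_i=0}$. But the proof stops exactly where the theorem actually lives: you state yourself that ruling out that divisibility ``uniformly in $q$ and over all admissible edges $\l$'' is ``the core technical challenge'' and ``the remaining task.'' A single worked example ($q=2$, $\l=e_1-e_2$) does not discharge this, so as it stands there is a genuine gap, and moreover the one-variable-at-a-time constraint you derive is weaker than what the paper needs: the hard subcase is when \emph{one} root absorbs the entire monomial $\prod_i\xi_i^{|n_i|}$, and no single specialization $\xi_i=0$ detects the contradiction there.

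For comparison, the paper closes the argument with several ideas absent from your proposal. (1) An explicit coefficient computation (Remark \ref{rem1}): the coefficient of $\xi_i^q$ in $\bar\l(\xi)$ is $-qn_i$ (black) or $4(q+1)+qn_i$ (red), nonzero except for the single exceptional case $q=4$, $\l=-5e_i+e_j+e_k+e_m$, which is treated separately at the end. This pins down exactly which root can be divisible by which $\xi_i$, producing a partition $A\cup B$ of the support. (2) When both $A$ and $B$ are nonempty, the identity $\bar\l=-(\xi_i^{|n_i|}s_i+\xi_j^{|n_j|}u_j)$ forces the support into $\{i,j\}$, and the black case $\l=ne_i-ne_j$ is killed either by exhibiting an explicit monomial $\xi_2^{n-1}\xi_3^{q+1-n}$ in $\l(\xi)$ (for $n>1$) or, for $n=1$, by a reduction modulo a prime $p\mid q+1$ comparing $p$-adic valuations of the free coefficient. (3) When one root carries the full monomial, the decisive trick is the specialization $\xi_1=\cdots=\xi_k$ on the whole support: for black edges $\l(\xi)$ vanishes identically there (Remark \ref{rem1.2}, since $\sum n_i=0$), so the free coefficient would have to be divisible by $\xi_1^{2\sum|n_i|}$, while an explicit surviving monomial of $c(\l)^2|_{\xi_1=\cdots=\xi_k}$ (from Remark \ref{rem5} if $k=m$, or the term $\alpha=(0,\ldots,0,q-|\l^+|_1,0,\ldots)$ if $k<m$) shows it is not; the red case uses the analogous computation of $\bar\l$ on the diagonal. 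If you want to complete your version, you would need to import at least ingredients (1) and (3); the $\xi_i=0$ specialization alone will not suffice.
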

\begin{proof}  We choose the root so that the graph has one of the forms:
    $$\xymatrix{ &0\ar@{-}[r]^{\l}_{black} &\ell&}\quad \text{
    or}\quad
    \xymatrix{ &0\ar@{=}[r]^{\l}_{red} &\ell&}$$
  Let $ \ell=\sum_{i=1}^kn_ie_i, n_i\neq 0$.
We have
\begin{equation}\label{7}\ell(\xi)=\frac{1}{q+1}\sum_{i=1}^kn_i\frac{\partial}{\partial\xi_i}A_{q+1}(\xi)=\sum_{i=1}^kn_i\sum_{\beta\in\mathbb{N}^m;|\beta|_1=q+1;\beta_i\geq
    1}(\begin{array}{c}q+1\\\beta\end{array})(\begin{array}{c}q\\\beta_1,. . . ,\beta_i-1,. . . ,\beta_m\end{array})\xi_1^{\beta_1}. . . \xi_i^{\beta_i-1}. . . \xi_m^{\beta_m}\end{equation}Set $\bar \ell(\xi):=\ell(\xi)$ if $\eta(\ell)=0$ and $\bar \ell(\xi):=-\ell(\xi)+2(q+1)A_q(\xi)$ if    $\eta(\ell)=-2. $
\begin{remark}\label{rem1}  For every $i$ in the support of $\l$, unless $q=4$ and $\ell=-5e_i+e_j+e_k+e_m$, the polynomial
 $\bar \ell(\xi)$ contains the term $\xi_i^q$ with non zero coefficient.
\end{remark}
 \textbf{Proof:} In the formula of $\ell(\xi)$ there is the
monomial:
$$(n_i+(q+1)\sum_{h\neq i}n_h)\xi_i^q,$$ since $\sum_hn_h=\eta(\ell)$ this equals
%\begin{multline}
%\left\{%
%\begin{array}{lll}
 % -qn_j\xi_j^q, & \hbox{\eta(\ell)=0;} \\
 %  [n_j+(q+1)(-2-n_j)]\xi_j^q , & \hbox{\eta(\ell)=-2} \\
%\end{array}%
%\right.
%\end{multline}
%$$\begin{cases}
$$ -qn_i\xi_i^q\quad\text{if}\ \eta(\ell)=0$$
and $$[n_i+(q+1)(-2-n_i)]\xi_i^q\quad\text{if}\ \eta(\ell)=-2$$
%\end{cases} $$
In $A_q(\xi)$ the monomial $\xi_i^q$ appears with coefficient 1,so
we get in $\bar{\l}$ the coefficient of $\xi_i^q$ is:
\begin{equation}\label{coef}
-n_i+(q+1)(2+n_i)+2(q+1)=4(q+1)+qn_i\end{equation} which is   non
zero unless $q=4, n_i=-5$ or $q=2, n_i=-6; q=1, n_i=-4$, by
Formula \eqref{edg} these last two cases do  not occur since
$|n_i|\leq 2q$.  As for $q=4$  an edge is a sum of  8 elements
$e_i$ with sign $\pm 1$, if there is a coefficient $-5$ at least 5
elements have coefficient $-1$, then there must be 3 elements with
coefficient 1 to give $\eta(\ell)=-2$.
%\marginpar{this case needs further treatment,   one can do it by
%hand}
This extra case will be considered at the end of this subsection.
$\Box$

We now compute with the matrix
\[C_{\mathcal{G}}=\left(%
\begin{array}{cc}
  0 & \sigma_\l c(\l) \\
    c(\l) &  \bar\l(\xi) \\
\end{array}%
\right)\]
\begin{equation}\label{5}
\chi_{\Gc}(t)=det \left(%
\begin{array}{cc}
  t & -\sigma_\l c(\ell)\\
  - c(\ell) & t-\bar\l(\xi) \\
\end{array}%
\right)=t^2-\bar\l(\xi)t-\sigma_\l c(\l)^2.
\end{equation}
Suppose that $\chi_{\Gc}$ is not irreducible, then:
\begin{equation}\label{6'}
\chi_{\Gc}(t)=(t+r(\xi))(t-\bar\l(\xi) -r(\xi)).
\end{equation}
Compare the free coefficients in \ref{5} and \ref{6'} we get
\begin{equation}\label{6*}
r(\xi)(-\bar\l(\xi) -r(\xi))=-\sigma_\l c(\l)^2.
\end{equation}
By the formula \ref{06'} $c(\l)^2$ is divisible by
$\xi_i^{|n_i|},\forall i=1,. . . ,k$.  \newline For any $i$ if
$r(\xi)$ is divisible by $\xi_i$, by remark \ref{rem1}
$\bar{\l}(\xi)$ is not divisible by $\xi_i$, then $-\bar\l(\xi)
-r(\xi)$ is not divisible by $\xi_i$.  And inversely, if
$-\bar\l(\xi) -r(\xi)$ is divisible by $\xi_i$, then $r(\xi)$ is
not divisible by $\xi_i$. Hence we have:
\begin{eqnarray}\label{sys_eq1}
% \nonumber to remove numbering (before each equation)
  r(\xi)&=&\xi_i^{|n_i|}s_i, i\in A  \\
  -\bar\l(\xi)-r(\xi) &=& \xi_j^{|n_j|}u_j, j\in B.
\end{eqnarray}
where $A\cup B=\{1,. . . ,k\}; A\cap B=\emptyset$.  \begin{enumerate}
    \item If $A\neq \emptyset$ and $B\neq\emptyset$, then for some couple
$i,j$ we have:
\begin{equation}\label{11}
\bar\l(\xi)= -(\xi_i^{|n_i|}s_i+\xi_j^{|n_j|}u_j)
\end{equation}
From   remark \ref{rem1} we must
have $n_h=0,\forall h\neq i,j$,
\begin{enumerate}
\item\label{black}
 \textbf{When $\l$ is a black edge:}
\newline We have  $\sma_\l=1$ and by the definition of edge (cf.  \ref{edge}) $\l=ne_i-ne_j;2|n|\leq 2q$.  We may suppose $i=1,j=2,n>0$.  We have $\bar\l(\xi)=\l(\xi)$ and:
\begin{multline}\label{12}
\l(\xi) =n(\sum_{\beta\in
\mathbb{N}^m;|\beta|_1=q+1,\beta_1\geqslant 1}(\begin{array}{c}q+1
\\\beta\\\end{array})
(\begin{array}{c}
                       q \\
                       \beta_1-1,\beta_2,. . . ,\beta_m \\
                     \end{array})
                     \xi_1^{\beta_1-1}\xi_2^{\beta_2}. . . \xi_m^{\beta_m}-\\
                     -\sum_{\beta'\in
\mathbb{N}^m;|\beta'|_1=q+1;\beta'_2\geqslant
1}(\begin{array}{c}q+1
\\\beta'\\\end{array})(\begin{array}{c}
                      q \\
                       \beta'_1,\beta'_2-1,. . . ,\beta'_m \\
                     \end{array})\xi_1^{\beta'_1}\xi_2^{\beta'_2-1}. . . \xi_m^{\beta'_m})
\end{multline}
Remark that
\[\xi_1^{\beta_1-1}\xi_2^{\beta_2}. . . \xi_m^{\beta_m}=\xi_1^{\beta'_1}\xi_2^{\beta'_2-1}. . . \xi_m^{\beta'_m}\Leftrightarrow \beta_1-1=\beta'_1,\beta_2=\beta'_2-1,\beta_i=\beta'_i\forall i\geqslant 3\]
Then:
\begin{multline}\label{13}
\l(\xi) =n\sum_{\beta\in\mathbb{N}^m,|\beta|_1=q+1,\beta_1\geqslant
1}\frac{q!}{(\beta_1-1)!\beta_2!. . . \beta_m!}\frac{(q+1)!}{\beta_1!. . . \beta_m!}(1-\frac{\beta_1}{\beta_2+1})\xi_1^{\beta_1-1}\xi_2^{\beta_2}. . . \xi_m^{\beta_m}
\end{multline}
By \ref{11} we must have
\begin{equation}\label{14}
\l(\xi) =-(\xi_1^ns_1+\xi_2^nu_2).
\end{equation}
\begin{enumerate}
\item If $n>1$, we take $\beta_1=1,\beta_2=n-1,\beta_3=q+1-n,\beta_4=. . . =\beta_m=0$, then in the
formula \ref{13} of $\l(\xi) $, there is the monomial
\[n\frac{q!}{(n-1)!(q+1-n)!}\frac{(q+1)!}{(n-1)!(q+1-n)!}(1-\frac{1}{n})\xi_2^{n-1}\xi_3^{q+1-n}\neq
0\] and they are not divisible by $\xi_1^n$ or $\xi_2^n$.  This
contradicts \ref{14}.
\item $n=1$.  We have $\l^+=(1,0,. . . ,0);\l^-=(0,1,. . . ,0)$. Then from
\ref{06''} we get
\begin{equation}\label{15''}
c(\l)^2=(q+1)^2\xi_1\xi_2(\sum_{\alpha\in\N^m:\sum_ia_i+1=q}\left(%
\begin{array}{c}
  q \\
  \alpha_1+1,\alpha_2,. . . ,\alpha_m \\
\end{array}%
\right)\left(%
\begin{array}{c}
  q \\
  \alpha_1,\alpha_2+1,. . . ,\alpha_m \\
\end{array}%
\right)\xi^\alpha)^2
\end{equation}
\newline Let $p$ be a prime divisor of $q+1$:$q+1=p^ku,g. c. d(p,u)=1$.  We have:
\begin{equation}\label{15}
\chi_\Gc=t(t-\l(\xi) )(\mbox{mod } p)\Rightarrow
\chi_\Gc=(t+ps)(t-ps-\l(\xi) )
\end{equation}
By \ref{5}and \ref{15''} the free coefficient of $\chi_\Gc$ must
be divisible by $p^{2k}$:
\begin{equation}\label{15'}p^{2k}|ps(-\l(\xi) -ps)\end{equation}
By formula \eqref{13} we see that the coefficient of the term
$\xi_1^q$ is $-q$, the coefficient of the term $\xi_2^q$ is $q$.
One deduces that $\l(\xi) $ is not divisible by $p$ since $g. c.
d(q,q+1)=1$. Hence $(-\l(\xi) -ps)$ is not divisible by $p$. So by
\ref{15'} we must have $p^{2k-1}|s$.  Now take
$\xi_1=\xi_2\Rightarrow \l(\xi) =0$, then the free coefficient of
$\chi_\Gc$ when $\xi_1=\xi_2$ is divisible by $p^{4k}$.  But in
\ref{5} when $\xi_1=\xi_2$ the free coefficient of $\chi_\Gc$ is
$-c(\l)^2|_{\xi_1=\xi_2}$, it is not divisible by $p^{4k}$, since
in \ref{15''} if we take $\alpha_1=\alpha_2=0,\alpha_3=q-1$, we
have the monomial:
\[(q+1)^2\xi_1^2(q^2\xi_3^{q-1})^2\]
is not divisible by $p^{4k}$.
\end{enumerate}
\item When $\l$ is a red edge:
When $h\neq i,j,n_h=0$ we get the coefficient of the term
$\xi_h^q$ in $\bar{\l}(\xi)$ is $4(q+1)+qn_h=4(q+1)\neq 0$, so
\ref{11} cannot hold.
\end{enumerate}
\item If $B=\emptyset$, then
$A=\{1,. . . ,k\}$\begin{equation}\label{_r}r(\xi)=\xi_1^{|n_1|}.
. . \xi_k^{|n_k|}s\end{equation}.\begin{enumerate}\item{When $\l$
is a black edge:} Take $\xi_1=. . . =\xi_k$, by the remark
\label{rem4} we have $\l(\xi) |_{\xi_1=. . . \xi_k}=0$, hence
\begin{multline}\label{17}
\chi_\Gc(t)|_{\xi_1=. . . =\xi_k}=(t+r(\xi)|_{\xi_1=. . . =\xi_k})(t-r(\xi)|_{\xi_1=. . . =\xi_k})=\\=t^2-r(\xi)|_{\xi_1=. . . =\xi_k}^2.
\end{multline}
By \ref{17} the free coefficient of $\chi_\Gc|_{\xi_1=. . .
=\xi_k}$ is divisible by $\xi_1^{2\sum_{i=1}^k|n_i|}$.  But by
\ref{5} the free coefficient of $\chi_G|_{\xi_1=. . . =\xi_k}$ is
$-c(\l)^2|_{\xi_1=...=\xi_k}$.

-If $k=m$, then by remark \ref{rem5} $-c(\ell)^2|_{\xi_1=. . .
=\xi_k}=-(q+1)^2\xi_1^{\sum_{i=1}^k|n_i|}\left(%
\begin{array}{c}
  q \\
  \l^+ \\
\end{array}%
\right)^2\left(%
\begin{array}{c}
  q \\
  \l^- \\
\end{array}%
\right)^2$ is not divisible by $\xi_1^{2\sum_{i=1}^k|n_i|}$.

-If $k<m$, then
\begin{equation}
-c(\ell)^2|_{\xi_1=. . . =\xi_k}=-(q+1)^2\xi_1^{\sum_{i=1}^k|n_i|}(\sum_{\alpha\in\mathbb{N}^m:|\ell^++\alpha|_1=q}(\begin{array}{c}
                                     q \\
                                                                                                                \ell^++\alpha \\
                                                                                                              \end{array})(\begin{array}{c}
                                     q \\
                                                                                                                \ell^-+\alpha \\
                                                                                                              \end{array})\xi_1^{\sum_{i=1}^k\alpha_i}\xi_{k+1}^{\alpha_{k+1}}. . . \xi_m^{\alpha_m})^2
\end{equation}
Take $\alpha_1=...=\alpha_k=0,\alpha_{k+1}=q-|\l^+|_1$, we see
that $-c(\l)^2|_{\xi_1=...\xi_k}$ contains the term
$\xi_1^{\sum_{i=1}^k|n_i|}\xi_{k+1}^{2(q-|\l^+|_1)}$ with the
coefficient $-(q+1)^2(\begin{array}{c}
                                     q \\
                                                                                                                \ell^++\alpha \\
                                                                                                              \end{array})^2(\begin{array}{c}
                                     q \\
                                                                                                                \ell^-+\alpha \\
                                                                                                              \end{array})^2$. Hence $-c(\l)^2|_{\xi_1=...=\xi_k}$ is not divisible by
$\xi_1^{2\sum_{i=1}^k|n_i|}$.
\item{When $\l$ is a red edge:}
Take $\xi_1=...=\xi_k$, we have
\begin{multline}\label{09}
\frac{\partial}{\partial
\xi_i}A_{q+1}(\xi)=\frac{\partial}{\partial
\xi_j}A_{q+1}(\xi)\forall i,j\Rightarrow
\l(\xi)|_{\xi_1=...=\xi_k}=\sum_{i=1}^{k}n_i\frac{\partial}{\partial
\xi_1}A_{q+1}(\xi)=\\=-2\frac{\partial}{\partial
\xi_1}A_{q+1}(\xi)=-2\sum_{|\alpha|_1=q+1,\alpha_1\geq
1}\frac{1}{q+1}\left(%
\begin{array}{c}
  q+1 \\
  \alpha \\
\end{array}%
\right)^2
\alpha_1\xi_1^{\alpha_1+\alpha_2+...+\alpha_k-1}\xi_{k+1}^{\alpha_{k+1}}...\xi_{m}^{\alpha_m}
\end{multline}
\begin{equation}\label{10}A_q(\xi)|_{\xi_1=...=\xi_k}=\sum_{\beta:|\beta|_1=q}\left(%
\begin{array}{c}
  q \\
  \beta \\
\end{array}%
\right)^2\xi_1^{\beta_1+...+\beta_k}\xi_{k+1}^{\beta_{k+1}}...\xi_m^{\beta_m}.\end{equation}
From \eqref{09} and \eqref{10} we have
\begin{multline}\label{36}-\bar{\l}(\xi)|_{\xi_1=...=\xi_k}=(\l(\xi)-2(q+1)A_q(\xi))|_{\xi_1=...=\xi_k}=\\=-2\sum_{\alpha:|\alpha|_1=q+1;\alpha_1\geq 1}(\frac{\alpha_1}{q+1}\left(%
\begin{array}{c}
  q+1 \\
  \alpha \\
\end{array}%
\right)^2+(q+1)\left(%
\begin{array}{c}
  q \\
  \alpha_1,...,\alpha_m \\
\end{array}%
\right)^2)\xi_1^{\alpha_1+...+\alpha_k-1}\xi_{k+1}^{\alpha_{k+1}}...\xi_m^{\alpha_m}=\\=-2\sum_{\alpha:|\alpha|_1=q+1;\alpha_1\geq
1}(\frac{\alpha_1}{q+1}(\frac{(q+1)!}{\alpha_1!...\alpha_m!})^2-(q+1)(\frac{q!}{(\alpha_1-1)!...\alpha_m!})^2)\xi_1^{\alpha_1+...+\alpha_k-1}\xi_{k+1}^{\alpha_{k+1}}...\xi_m^{\alpha_m}=\\=2\sum_{\alpha:|\alpha|_1=q+1;\alpha_1>
1}\frac{q!}{(\alpha
_1-1)!...\alpha_m!}\frac{(q+1)!}{\alpha_1!...\alpha_m!}(\alpha_1-1)\xi_1^{\alpha_1+...+\alpha_k-1}\xi_{k+1}^{\alpha_{k+1}}...\xi_m^{\alpha_m}.\end{multline}Hence
$-\bar{\l(\xi)}|_{\xi_1=...=\xi_k}$ is divisible by $\xi_1$. By
\eqref{_r} $r(\xi)|_{\xi_1=...=\xi_k}=\xi_1^{n_1+...+n_k}s$ is
divisible. Then $(-\bar{\l(\xi)}-r(\xi))|_{\xi_1=...=\xi_k}$ is
divisible by $\xi_1$.By \eqref{6*} and \eqref{_r}we
have:\begin{multline}\xi_1^{|n_1|}...\xi_k^{|n_k|}s(-\bar{\l}(\xi)-r(\xi))=c(\l)^2=\xi_1^{|n_1|}...\xi_k^{|n_k|}(\sum_{\alpha\in
\N^m:|\l^++\alpha|_1=q-1}\left(%
\begin{array}{c}
  q-1 \\
  \l^++\alpha\\
\end{array}%
\right)\left(%
\begin{array}{c}
  q+1 \\
  \l^{-}+\alpha \\
\end{array}%
\right)\xi^\alpha)^2\\\implies
s(-\bar{\l}(\xi)-r(\xi))=(\sum_{\alpha\in
\N^m:|\l^++\alpha|_1=q-1}\left(%
\begin{array}{c}
  q-1 \\
  \l^++\alpha\\
\end{array}%
\right)\left(%
\begin{array}{c}
  q+1 \\
  \l^{-}+\alpha \\
\end{array}%
\right)\xi^\alpha)^2\end{multline}\begin{equation}\label{36'}\implies
s(-\bar{\l}(\xi)-r(\xi))=(\sum_{\alpha\in
\N^m:|\l^++\alpha|_1=q-1}\left(%
\begin{array}{c}
  q-1 \\
  \l^++\alpha\\
\end{array}%
\right)\left(%
\begin{array}{c}
  q+1 \\
  \l^{-}+\alpha \\
\end{array}%
\right)\xi^\alpha)^2.\end{equation} So the right hand side of
\eqref{36'} when $\xi_1=\xi_2=...=\xi_k$ must be divisible by
$\xi_1$. But in fact:

- If $k=m$, then by remark \ref{rem5} $$(\sum_{\alpha\in
\N^m:|\l^++\alpha|_1=q-1}\left(%
\begin{array}{c}
  q-1 \\
  \l^++\alpha\\
\end{array}%
\right)\left(%
\begin{array}{c}
  q+1 \\
  \l^{-}+\alpha \\
\end{array}%
\right)\xi^\alpha)^2=\left(%
\begin{array}{c}
  q-1 \\
  \l^+ \\
\end{array}%
\right)^2\left(%
\begin{array}{c}
  q+1 \\
  \l^- \\
\end{array}%
\right)^2$$ is a constant, not divisible by $\xi_1$.

- If $k<m$, take $\tilde{\alpha} $ such that
$\tilde{\alpha}_1=...=\tilde{\alpha}_k=0,\tilde{\alpha}_{k+1}=q-1-\l^+$
then the right hand side of \eqref{36'} contains the monomial
$$\left(%
\begin{array}{c}
  q-1 \\
  \l^++\tilde{\alpha} \\
\end{array}%
\right)^2\left(%
\begin{array}{c}
  q+1 \\
  \l^-+\tilde{\alpha} \\
\end{array}%
\right)^2\xi_{k+1}^{2(q-1-|\l^+|_1)}.$$ Hence the right hand side
of \eqref{36'} is not divisible by $\xi_1$.
\end{enumerate}
\item The case $A=\emptyset,B=\{1,. . . ,k\}$ is similar.
\end{enumerate}
\textbf{Extra case:} \[q=4,\l=-5e_i+e_j+e_k+e_m\]In this case
$\sigma_\l=-1$. By \ref{06''} we have:
\begin{equation}
c(\l)=4\xi_i^{5/2}\xi_j^{1/2}\xi_k^{1/2}\xi_m^{1/2}.
\end{equation}
By \ref{6*}: \begin{equation}\label{20}r(\xi)(-\bar\l(\xi)
-r(\xi))=c(\l)^2=4\xi_i^{5/2}\xi_j^{1/2}\xi_k^{1/2}\xi_m^{1/2}.\end{equation}
Hence $r(\xi),-\bar{\l}(\xi)-r(\xi)$ must be monomials that
contains only variables from $\xi_i,\xi_j,\xi_k,\xi_m$.So
$\bar{\l}(\xi)$ must be a polynomial that contains only variables
$\xi_i,\xi_j,\xi_k,\xi_m$. But in fact, for $h$ that is not in the
support of $\l$,$n_h=0$ and by \eqref{coef} the coefficient of
$\xi_h^q$ in $\bar{l}(\xi)$ is $4(q+1)+qn_h=20$. Hence we have a
contradiction.
\end{proof}

\end{document}